\newcommand*{\mathtext}[1]{%
  \expandafter\def\csname#1\endcsname{\operatorname{#1}}}
\newtheorem{theorem}{Theorem}[section]
\newtheorem{lemma}[theorem]{Lemma}
\theoremstyle{definition}
\newcommand{\CC}{\mathbb C}
\newcommand{\RR}{\mathbb R}
\newcommand{\QQ}{\mathbb Q}
\newcommand{\ZZ}{\mathbb Z}
\date{\today}
\title[]{Toric h-vectors and Chow Betti Numbers of Dual Hypersimplices}
\author{Charles Wang}
\address{Department of Mathematics, UC Berkeley}
\email {charles.m.wang@berkeley.edu}
\author{Josephine Yu}
\address{School of Mathematics, Georgia Tech, Atlanta GA, USA}
\email{jyu@math.gatech.edu}
\thanks {\emph {2010 Mathematics Subject Classification:} 52B12,  14M25}
\thanks {}
\date{\today}
\begin{document}

\begin{abstract}
The toric $h$-numbers of a dual hypersimplex and the Chow Betti numbers of the normal fan of a hypersimplex are the ranks of intersection cohomology and Chow cohomology respectively of the torus orbit closure of a generic point in the Grassmannian.  We give explicit formulas for these numbers.  We also show that similar formulas hold for the coordinator numbers of type $A^*$ lattices.
\end{abstract}

\maketitle
 
\section{Introduction}

The hypersimplex $\Delta_{k,n}$ is the convex hull of all $0/1$-vectors in $\RR^n$ with exactly $k$ $1$'s. The $n-1$ dimensional complex projective toric variety, $X_{k,n}$, corresponding to the normal fan of $\Delta_{k,n}$ is the closure of the torus orbit of a generic point in the Grassmannian $\mathop{Gr}_{k,n}(\CC)$. Furthermore, $X_{k,n}$ is singular unless $k=1$ or $k=n-1$, when $\Delta_{k,n}$ is a simplex.

For smooth toric varieties associated to the normal fan of a simplicial polytope, the ranks of cohomology groups of the toric variety are the $h$-numbers of the polytope, which depend only on the face numbers of the polytope.  For singular toric varieties, which correspond to non-simplicial polytopes, Stanley introduced the {\em toric $h$-numbers}, which are the ranks of {\em intersection cohomology} groups of the toric variety~\cite{Stanley85}.  These numbers can also be computed combinatorially from the face lattice of the polytope.  We give a new proof of the formula, originally due to Jojic~\cite{Jojic}, for toric $h$-vectors of dual hypersimplices.

Fulton and Sturmfels showed that the rational Chow cohomology ring of a complete toric variety coincides with the space of Minkowski weights, which are functions on the set of cones of the corresponding fan that satisfy certain balancing conditions~\cite{FultonSturmfels}.  We prove a formula for the Chow Betti numbers of normal fans of hypersimplices by giving a basis for the Minkowski weight space.

The $d$-dimensional diplo-simplex is the convex hull of the shortest vectors in the lattice~$A_d^*$.  Its Ehrhart series counts the lattice points of a given distance from the origin and is called the {\em growth series} of the lattice.  We prove the formula for this growth series stated by Conway and Sloane~\cite{ConwaySloane} based on the work of O'Keeffe~\cite{OKeeffe}.  The proof is closely related to the toric $h$ numbers of dual hypersimplices.

\section{Toric h-numbers of $\Delta(k,n)^*$}

In \cite{Stanley85}, Stanley defined the toric $h$-numbers of a polytope via its face poset. They coincide with the ranks of (even dimension) intersection cohomology groups of the toric variety of the polytope's normal fan.

Two polynomials, $h(L,x)$ and $g(L,x)$, are defined inductively on Eulerian posets $L$ as follows\footnote{The polynomial $h$ corresponds to $f$ in~\cite{ec1,Stanley85} and $h$ in~\cite{kalai}}.  
For each Eulerian poset $L$, let $\widehat{0}$ and $\widehat{1}$ denote the least and greatest elements of $L$ respectively, $[\widehat{0},p] = \{q \in L : q \leq p\}$ for $p \in L$, and $\rank(p) = \rank[\widehat{0},p]$ where the rank of a graded poset is one less than the number of elements in any maximal chain. Then
\begin{itemize}
\item $h({\mathbf 1}, x) = g({\mathbf 1}, x) = 1$, where ${\mathbf 1}$ denotes the one element poset
\item For $\rank(L) > 0$:
\begin{equation}
\label{eq:torich}
h(L,x)= \sum_{p \in L - \{\widehat{1}\}} g([\hat{0},p],x)(x-1)^{\rank \widehat{1}- \rank(p)  - 1}
\end{equation}
\item For $\rank(L) > 0$, $d = \rank(L) - 1$, and $h(P,x) = h_0 + h_1 x + \cdots + h_d x^d$: 
\begin{equation}
\label{eq:toricg} 
g(L,x) = h_0 + (h_1 - h_0) x + (h_2 - h_1) x^2 + \cdots + (h_m - h_{m-1}) x^m
\end{equation} where $m = \lfloor d/2 \rfloor$. 
\end{itemize}
See~\cite[\S3.16]{ec1} for explicit examples.

For a polytope $P$ with face lattice $L$ and $h$ polynomial $h(L,x) = h_0 + h_1 x_1 + \cdots + h_d x^d$, the sequence of coefficients $(h_0, h_1, \dots, h_d)$ of $h$ is called the {\em toric $h$-vector} of $P$, where $d = \rank(L) - 1 = \dim(P)$.  It is symmetric ($h_i=h_{d-i}$ for all $0\le i\le d$) and unimodal ($1 = h_0 \leq h_1 \leq \cdots \leq h_{\lfloor (d-1)/2 \rfloor}$)~\cite{Stanley85}.  For a simplicial polytope $P$, the toric $h$-vector coincides with the usual $h$-vector defined by the coefficients of
\begin{equation}
\label{eq:usualh}
\sum_{\text{face } F \subseteq P} (x-1)^{d - \dim F} = h_0 x^d + h_1 x^{d-1} + \cdots + h_d.
\end{equation}

A $d$-dimensional polytope is called {\em quasi-simplicial} or {\em $d-2$ simplicial} if all of its faces of dimension $\leq d-2$ are simplices. Equivalently, all its facets are simplicial polytopes.
For a quasi-simplicial polytope $P$, the first half of the toric $h$-vector coincides with the first half of the usual $h$-vector, which follows from comparing \eqref{eq:torich} and \eqref{eq:usualh} and using that the $g$ polynomial for simplices is the constant polynomial~$1$.

\begin{lemma}
\label{quasi-simp}
For $0 \leq k \leq n$, the polytope $\Delta_{k, n}^*$ is quasi-simplicial. 
\end{lemma}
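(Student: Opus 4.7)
The plan is to use the standard facet description of the hypersimplex together with polar duality. The cases $k \in \{0, 1, n-1, n\}$ are trivial since $\Delta_{k,n}$ is then a point or a simplex (and so is $\Delta_{k,n}^*$), so I will assume $2 \leq k \leq n-2$. Writing $\Delta_{k,n} = \{x \in \RR^n : 0 \leq x_i \leq 1,\ \sum_i x_i = k\}$, its facets are exactly the $2n$ slices $F_i^0 = \Delta_{k,n} \cap \{x_i = 0\}$ and $F_j^1 = \Delta_{k,n} \cap \{x_j = 1\}$. First I would verify that every proper face $G$ of $\Delta_{k,n}$ has the form
\[
G_{A,B} = \{x \in \Delta_{k,n} : x_i = 0 \text{ for } i \in A,\ x_j = 1 \text{ for } j \in B\},
\]
where $A = \{i : x_i \equiv 0 \text{ on } G\}$ and $B = \{j : x_j \equiv 1 \text{ on } G\}$ are (necessarily disjoint) subsets of $[n]$. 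The facets of $\Delta_{k,n}$ containing $G_{A,B}$ are then precisely $\{F_i^0 : i \in A\} \cup \{F_j^1 : j \in B\}$, so there are $|A| + |B|$ of them.

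Next, a dimension count: if $G_{A,B}$ is not a vertex, its free coordinates trace out a copy of the smaller hypersimplex $\Delta_{k-|B|,\, n-|A|-|B|}$, giving $\dim G = n - |A| - |B| - 1$; equivalently, $|A| + |B| = n - 1 - \dim G$. Polar duality then yields $\dim G^* = (n-1) - 1 - \dim G = n - 2 - \dim G$, while the vertices of $G^*$ correspond bijectively to the facets of $\Delta_{k,n}$ containing $G$. Combining, $G^*$ has $|A| + |B| = \dim G^* + 1$ vertices, so $G^*$ is a simplex.

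To finish, note that the faces of $\Delta_{k,n}^*$ of dimension at most $d - 2 = n - 3$ are exactly the duals of proper faces of $\Delta_{k,n}$ of dimension at least $1$; by the above, all such duals are simplices, which gives quasi-simpliciality. The main obstacle is really the first step --- confirming that every proper face of $\Delta_{k,n}$ arises as some $G_{A,B}$ and that the maximal pair $(A,B)$ records exactly the containing facets. This is a standard analysis of tight inequalities on the hypersimplex, but it is the input that makes the subsequent counts immediate.
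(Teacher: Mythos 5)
Your proof is correct and takes essentially the same route as the paper's: both rest on the parametrization of the faces of $\Delta_{k,n}$ by pairs of disjoint index sets (your $G_{A,B}$, the paper's $\mathcal{F}_{I,J}$) together with a counting argument transported through polar duality. The only differences are cosmetic --- the paper counts the $2$-faces containing each edge of $\Delta_{k,n}$ (getting exactly $n-2$, so the dual $(n-3)$-faces are simplices), whereas you count the facets containing an arbitrary face of dimension $\geq 1$ to see that every dual face of dimension $\leq n-3$ has $\dim + 1$ vertices; you also derive the face description from the facet inequalities rather than citing it.
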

\begin{proof}
We will show that $\Delta_{k,n}$ is quasi-simple, i.e.\ every edge is contained in exactly $d-1$ $2$-faces.  By \cite{FultonSturmfels}, every face of $\Delta_{k,n}$ is of the form $\mathcal{F}_{I,J}$, where $|I|<k$ and $|J|<n-k$. In particular, an edge of $\Delta_{k,n}$ has the form $\mathcal{F}_{I,J}$ where $I \cap J = \varnothing$, $|I|=k-1$ and $|J|=n-k-1$. 

For an edge $\mathcal{F}_{I,J}$, any $2$-face containing the edge has the form $\mathcal{F}_{I',J'}$ where $I'\subset I$, $J'\subset J$, and $|I'|+|J'| = n-3$, so it is obtained by deleting a single element from either $I$ or $J$. Since $|I|+|J|=n-2$, there are exactly $n-2$ $2$-faces containing each edge of $\Delta_{k,n}$. 
\end{proof}

Let the $r^{th}$ entry of the toric $h$-vector of $P$ be denoted by $h_r(P)$.  The following follows from~\cite[Theorem~5]{Jojic}.  We give a new proof here.  See Table~\ref{table:compare} on \pageref{table:compare} for examples.
\begin{theorem}[Toric $h$ numbers]
\label{thm:toricH}
For any $1 \leq k \leq \lfloor n/2 \rfloor$,
\[
h_r(\Delta_{k,n}^*) = \left\{ 
\begin{array}{cl}
\sum\limits_{i=0}^r {n \choose i}  & \text{ if } 0 \leq r \leq k-1 \\
\sum\limits_{i=0}^{k-1} {n \choose i} & \text{ if } k \leq r \leq \lfloor \frac{n}{2} \rfloor
\end{array}
\right. .
\] 
\end{theorem}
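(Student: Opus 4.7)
The plan is to reduce the statement to a coefficient extraction on the ordinary $h$-polynomial and then evaluate it using one binomial identity plus inclusion--exclusion. Since $\Delta_{k,n}^*$ is quasi-simplicial by Lemma~\ref{quasi-simp}, its first $\lfloor (n-1)/2\rfloor+1$ toric $h$-numbers coincide with the ordinary ones (with $d = n - 1$); combined with the symmetry $h_r = h_{d-r}$ of the toric $h$-vector, it suffices to verify the claim for $0 \le r \le \lfloor (n-1)/2\rfloor$, with the entry at $r = \lfloor n/2\rfloor$ (present only when $n$ is even) recovered by symmetry from $r = \lfloor n/2\rfloor - 1$.

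Using the parametrization of faces from the proof of Lemma~\ref{quasi-simp}, the positive-dimensional faces of $\Delta_{k,n}$ are the $\mathcal{F}_{I,J}$ with disjoint $(I, J)$, $|I| \le k - 1$, $|J| \le n - k - 1$, of dimension $n - 1 - |I| - |J|$, while the vertices number $\binom{n}{k}$. Writing $a = |I|$, $b = |J|$ and dualizing yields
\[
h(\Delta_{k,n}^*, x) = \binom{n}{k} + \sum_{a=0}^{k-1} \sum_{b=0}^{n-k-1} \binom{n}{a}\binom{n-a}{b}(x-1)^{n-1-a-b}.
\]

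The heart of the argument is to close this double sum. The identity $x^{n-a} = ((x-1) + 1)^{n-a}$ gives $\sum_{b=0}^{n-a-1}\binom{n-a}{b}(x-1)^{n-1-a-b} = (x^{n-a} - 1)/(x - 1)$. Applying inclusion--exclusion in the two upper bounds $a \le k - 1$ and $b \le n - k - 1$, the cross-term with $a \ge k$ and $b \ge n - k$ is empty (such pairs force $a + b \ge n$, incompatible with a positive-dimensional face), while the two remaining one-sided corrections each telescope via the identity above (the correction in $b$ after applying the symmetry $\binom{n}{a}\binom{n-a}{b} = \binom{n}{b}\binom{n-b}{a}$ and relabeling $m = n - b$). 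This should produce
\[
h(\Delta_{k,n}^*, x) = \binom{n}{k} + \sum_{a=0}^{k-1} \binom{n}{a}\,\frac{x^{n-a} - 1}{x - 1} - \sum_{m=1}^{k} \binom{n}{m}\,\frac{x^{m} - 1}{x - 1},
\]
from which $[x^{n-1-r}](\cdot) = \sum_{a=0}^{\min(k-1,\,r)}\binom{n}{a} - \sum_{m = n - r}^{k} \binom{n}{m}$ via $(x^\ell - 1)/(x - 1) = 1 + x + \cdots + x^{\ell - 1}$. The hypotheses $k \le \lfloor n/2\rfloor$ and $r \le \lfloor (n-1)/2\rfloor$ force $n - r > k$, making the second sum empty and reducing the first to the two stated cases.

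I expect the central step---the inclusion--exclusion and the two telescopings---to be the main obstacle, since one must check that the cross-term vanishes for dimensional reasons rather than as an inclusion--exclusion artifact, and that the second one-sided sum collapses cleanly to $\sum_{m=1}^{k}\binom{n}{m}(x^m - 1)/(x - 1)$ after the $(a, b)$ swap. Once that reduction is in place, the coefficient extraction and case analysis are routine, and the case $r = \lfloor n/2\rfloor$ for even $n$ then follows from the symmetry of the toric $h$-vector.
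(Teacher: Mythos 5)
Your proof is correct, but for the range $k \le r \le \lfloor n/2\rfloor$ it takes a genuinely different route from the paper. For $0\le r\le k-1$ the two arguments essentially coincide (quasi-simpliciality via Lemma~\ref{quasi-simp} plus a binomial-theorem manipulation of the $f$-polynomial), though you organize the computation as a closed form for the full double sum over face types $(I,J)$; your identity $h(\Delta_{k,n}^*,x)=\binom{n}{k}+\sum_{a=0}^{k-1}\binom{n}{a}\frac{x^{n-a}-1}{x-1}-\sum_{m=1}^{k}\binom{n}{m}\frac{x^m-1}{x-1}$ checks out (e.g.\ for $k=2$, $n=4$ it gives $x^3+5x^2-x+1$, the ordinary $h$-polynomial of the cube), and the cross-term in your inclusion--exclusion does vanish because $a\ge k$ and $b\ge n-k$ force $a+b\ge n$ while every face has $a+b\le n-1$. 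The divergence is in the middle range: the paper never computes the ordinary $h$-numbers there; it instead evaluates the toric $h$-polynomial at $x=1$, obtaining the total $k\binom{n}{k}$ because each facet of $\Delta_{k,n}^*$ is the free sum $\Delta_{k-1}\oplus\Delta_{n-k-1}$ whose $h$-polynomial is a product, and then pins down the individual middle entries using unimodality of the toric $h$-vector together with a binomial identity. You instead push the direct computation through the entire first half $r\le\lfloor (n-1)/2\rfloor$ --- which is legitimate, since the facet $g$-corrections in \eqref{eq:torich} only perturb coefficients of $x^j$ with $j\le\lfloor(n-2)/2\rfloor$, i.e.\ $h_r$ for $r$ in the second half --- and recover the single remaining entry $r=n/2$ (present only for $n$ even) from the symmetry $h_r=h_{n-1-r}$. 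Your route yields the full ordinary $h$-polynomial explicitly and needs only symmetry of the toric $h$-vector, avoiding both unimodality and the free-sum product formula; the paper's route avoids computing the ordinary $h$-numbers in the middle range, where they genuinely differ from the toric ones.
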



\begin{proof}

  Let $0\le r\le k-1$. Then $\Delta_{k,n}^*$ has $f_r = \sum_{i=0}^{r+1} \binom{n}{i}\binom{n-i}{r+1-i}=2^{r+1}\binom{n}{r+1}$ faces of dimension~$r$. Let $f(x)=x^{n-1}+\sum_{r=0}^{n-2} f_r\, x^{n-2-r}$. Since $\Delta_{k,n}^*$ is quasi-simplicial, it suffices to compute the $r^{th}$ entry of the usual $h$-vector by the coefficient of $x^{n-1-r}$ in $f(x-1)$. Since $f$ coincides with $\frac{(x+2)^n-2^n}{x}$ for the $k$ highest degree terms, the coefficients of $x^{n-1-r}$ in $f$ and in $\frac{(x+1)^n-2^n}{x-1}$ coincide. It is straightforward to check that $(x-1)\sum_{i=0}^n \left(\sum_{j=0}^i \binom{n}{j}\right) x^{n-1-i}$ and $(x+1)^n-2^n$ coincide for the $k$ highest degree terms, and this gives the desired toric $h$-numbers for $0 \leq r \leq k-1$. 

Now suppose $k \leq r \leq \lfloor \frac{n}{2} \rfloor$. The sum of toric $h$-numbers is the value of the $h$-polynomial at $x=1$.  By~\eqref{eq:torich}, the only nonzero contributions to $h$ at $x=1$ come from corank one elements. Evaluating the $g$-polynomial in~\eqref{eq:toricg} at $x=1$ gives the middle coefficient of the $h$-polynomials.  Thus the sum of the toric $h$-numbers of a polytope $P$ is the sum of the middle $h$-numbers of the facets of $P$.
Each of the ${n \choose k}$ facets of $\Delta_{k,n}^*$ is isomorphic to the free sum $\Delta_{k-1} \oplus \Delta_{n-k-1}$. 
The $h$-polynomial of a free sum is the product of $h$-polynomials~\cite[Remark~3, p.205]{kalai}, so each $\Delta_{k-1} \oplus \Delta_{n-k-1}$ has $h$-polynomial $(x^{k-1}+\cdots+x+1)\cdot(x^{n-k-1}+\cdots+x+1)$, which has middle coefficient $k$. Thus the sum of toric $h$-numbers of $\Delta_{k,n}^*$ is equal to $k \cdot {n \choose k}$.

Since the toric $h$-vector is unimodal and $h_{k-1} = h_{n-k} = \sum_{i=0}^{k-1}\binom{n}{i}$ from above, we have
\begin{equation}
\label{eq:middlesum}
h_k +\cdots + h_{n-k-1} \geq (n-2k)\sum_{i=0}^{k-1}\binom{n}{i}
\end{equation}
with equality if and only if $h_k=\dots=h_{n-k-1}=\sum_{i=0}^{k-1}\binom{n}{i}$.  On the other hand, 
\[
k\binom{n}{k}
=(n-2(k-1))\binom{n}{k-1}+(k-1)\binom{n}{k-1} = \cdots =\sum_{i=0}^{k-1}(n-2i)\binom{n}{i}
\]
and $\sum_{i=0}^{k-1}\sum_{j=0}^i\binom{n}{j}  = \sum_{i=0}^{k-1}(k-i)\binom{n}{i}$, so
\[
h_k +\cdots + h_{n-k-1} = k\binom{n}{k}-2\sum_{i=0}^{k-1}\sum_{j=0}^i\binom{n}{j}=(n-2k)\sum_{i=0}^{k-1}\binom{n}{i}.
\]
Thus the inequality in \eqref{eq:middlesum} is tight, and $h_k=\dots=h_{n-k-1}=\sum_{i=0}^{k-1}\binom{n}{i}$.
\end{proof}

\section{Chow-Betti numbers}

The {\em Chow-Betti number}, $\beta_{r,k,n}$, of the hypersimplex $\Delta(k,n)$ is the rank of the rational Chow cohomology group, $A^r(X_{k,n})$, of $X_{k,n}$, the toric variety associated to the normal fan of $\Delta_{k,n}$. The group $A^r(X_{k,n})$ is free abelian and is naturally isomorphic to the group of rational valued Minkowski weights on the codimension-$r$ cones in the normal fan of $\Delta_{k,n}$~\cite{FultonSturmfels}.  Fulton and Sturmfels computed $\beta_{r,k,n}$ for small values of $k$ and $n$ and asked whether there exist nice general formulas.  We give such formulas in Theorem~\ref{thm:ChowBetti}.

The codimension-$r$ cones in the normal fan of $\Delta_{k,n}$ are indexed by pairs $(I,J)$ where $I$ and $J$ are disjoint subsets of $\{1,2,\dots,n\}$ with 
\begin{equation}
\label{eq:sizes}|I|+|J| = n-r-1,~~ |I| < k, \text{ and } |J| < n-k.
\end{equation}
We define the {\em level}, $\ell^{r,k,n}(I,J)$, of a pair $(I,J)$ to be the difference between $|I|$ and its maximum possible value satisfying~\eqref{eq:sizes}.  For example, for $n=8, k=3, r=2$, we have $|I|+|J| = 8-2-1 = 5$, and $|I| < 3, |J| < 8-3 = 5$, so $1 \leq |I| \leq 2$.  Thus $(\{1,2\},\{3,4,5\})$ is at level $0$ while $(\{1\},\{2,3,4,5\})$ is at level $1$.

It follows from the description of the faces and the normal fan of $\Delta_{k,n}$ that a Minkowski weight on codimension $r$ cones is a functions~$c$ on the set of ordered pairs as in~\eqref{eq:sizes} satisfying the following {\em balancing conditions}~\cite{FultonSturmfels}
\begin{align}
\label{eq:middle} c(Ax,B) - c(Ay,B)  &= c(A,Bx) - c(A,By) && \text{ if } |A| < k-1 \text{ and } |B| < n-k-1 \\
\label{eq:Imax} c(I,J)  &= c(I, J') && \text{ if } |I| = k-1 \\
\label{eq:Jmax} c(I,J)  &= c(I', J) && \text{ if } |J| = n-k-1 
\end{align}
where $Ax$ denotes $A \cup \{ x \}$.

The following is our main result about Chow-Betti numbers. We only consider $1\le k\le \lfloor\frac{n}{2}\rfloor$ because $\Delta_{k,n}$ and $\Delta_{n-k,n}$ are isomorphic. We have $\beta_{0,k,n} = 1$ for all $k$ and $n$ because constants are the only Minkowski weights on the full dimensional cones of a complete fan.

\begin{theorem}[Chow-Betti numbers]
\label{thm:ChowBetti} 
For $1\le k\le \lfloor\frac{n}{2}\rfloor$, 
\[
\beta_{r,k,n}= \left\{ 
\renewcommand\arraystretch{2}
\begin{array}{cl}
\sum\limits_{i=0}^{r-1} {n \choose i} & \text{ if } 1 \leq r \leq k\\
\sum\limits_{i=0}^{k-1} {n \choose i} & \text{ if } k < r < n- k\\
\sum\limits_{i=0}^{n-r-1} {n \choose i} & \text{ if } n-k \leq r \leq n-1
\end{array}
\right. .
\]
A basis for the Chow cohomology $A^r(X_{k,n})$ over $\QQ$ is given by the following Minkowski weights $c^{r,k,n}_S$ defined as follows for $(I,J)$ satisfying~\eqref{eq:sizes}
\[c^{r,k,n}_S(I,J)=\begin{cases} 1 & \text{if } S \subseteq I \cup J \text{ and } |J \cap S|= \ell^{r,k,n}(I,J) \\ 0& \text{otherwise}  \end{cases}
\]
where  $S$ runs over all subsets of~$\{1,2,\dots,n\}$ of size at most $r-1$, $k-1$, and $n-r-1$ respectively for the three cases.

\end{theorem}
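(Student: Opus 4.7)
The plan is to establish the theorem in three stages: verify that each $c^{r,k,n}_S$ is a Minkowski weight, prove linear independence of the proposed basis, and bound the dimension of the Minkowski weight space from above to match the claimed count.

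For the Minkowski property I would check the three balancing relations directly. The boundary conditions simplify cleanly: when $|I|=k-1$ the level $\ell^{r,k,n}(I,J)$ is $0$, so $c^{r,k,n}_S(I,J)$ equals the indicator of $S\subseteq I$ and is therefore independent of $J$, giving \eqref{eq:Imax}; when $|J|=n-k-1$ (a non-vacuous configuration only in case~1) the level is maximal, so $c^{r,k,n}_S(I,J)$ equals the indicator of $S\subseteq J$, giving \eqref{eq:Jmax}. For \eqref{eq:middle} I would case-split on whether $x$ and $y$ lie in $S$; using that $\ell(Ax,B)=\ell(Ay,B)$ and $\ell(A,Bx)=\ell(A,By)$, with these two levels differing by exactly one, each subcase reduces either to $0=0$ or to both sides coinciding with the same indicator on $A\cup B$.

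For linear independence I would argue by downward induction on $|S|=s$. Assume $a_{S'}=0$ for $|S'|>s$ in a relation $\sum_S a_S c^{r,k,n}_S=0$. At a pair $(I,J)$ of level $s$, the condition $|S\cap J|=s=|S|$ forces $S\subseteq J$, so the relation collapses to $\sum_{S\subseteq J,\,|S|=s}a_S=0$ for every $J$ of the size dictated by level~$s$ (note that the equation does not involve $I$). This asserts that $(a_S)_{|S|=s}$ lies in the kernel of the inclusion matrix of $s$-subsets into $|J|$-subsets, which by the Gottlieb--Kantor rank theorem has full column rank $\binom{n}{s}$ once one verifies $s\le|J|\le n-s$; these bounds are easily checked in each of the three cases, using $r+k\le n$ and $s\le r-1$, $s\le k-1$, or $|J|=s$ respectively. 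Hence $a_S=0$ for $|S|=s$, completing the induction.

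The remaining and hardest task is the upper bound on $\dim A^r(X_{k,n})$. The plan is to peel off contributions level by level from the top. Given a Minkowski weight $c$, the relevant boundary condition forces the top-level restriction of $c$ to be a function $g$ of a single variable (a $|J|$-subset in cases~1 and~2, an $|I|$-subset in case~3). The main obstacle is to show that $g$ always lies in the span of the indicator functions $\{J\mapsto [S\subseteq J]:|S|=\ell_{\max}\}$, a subspace of dimension $\binom{n}{\ell_{\max}}$. I would prove this containment by iteratively applying \eqref{eq:middle} to express the differences of $g$ as differences of $c$ at lower levels, showing inductively that $g$ satisfies the recursive relations characterizing the image of the inclusion-matrix transpose. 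Once $g$ is matched by $\sum_{|S|=\ell_{\max}}a_S c^{r,k,n}_S$ on the top level, subtracting produces a Minkowski weight vanishing on level~$\ell_{\max}$; iterating the argument downward and combining with the independence already established yields $\dim A^r(X_{k,n})=\sum_{i=0}^{\ell_{\max}}\binom{n}{i}$, which matches the stated formula in each of the three cases.
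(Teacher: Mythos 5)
Your verification that the $c^{r,k,n}_S$ satisfy \eqref{eq:middle}--\eqref{eq:Jmax} and your linear-independence argument are both sound and essentially the same as the paper's: the downward induction on $|S|$ is the paper's block-triangular matrix read off one diagonal block at a time, and the rank fact you attribute to Gottlieb--Kantor is the same set-inclusion-matrix rank theorem the paper takes from de Caen, with the same side conditions $s\le |J|$ and $s+|J|\le n$ to check.

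The gap is in the spanning argument, precisely at the step you yourself flag as ``the main obstacle.'' Knowing that a Minkowski weight vanishes on all levels above $\ell$ only forces, via \eqref{eq:middle}, that its restriction to level $\ell$ depends on $J$ alone; it does not place that function of $J$ inside the $\binom{n}{\ell}$-dimensional row space of the inclusion matrix $W_{\ell,|J|}$, which is a \emph{proper} subspace of the $\binom{n}{|J|}$-dimensional space of all functions of $J$ whenever $|J|>\ell$ --- as happens at every level in case~2 and in case~1. The constraints that would force this containment are not local to level $\ell$: they come from the solvability of the whole tower of equations \eqref{eq:middle} down to level $0$ together with \eqref{eq:Imax} there, and your one-sentence plan (``iteratively applying \eqref{eq:middle} \dots\ showing inductively that $g$ satisfies the recursive relations characterizing the image of the inclusion-matrix transpose'') neither states those relations nor explains how the balancing conditions imply them; since the containment is essentially equivalent to the spanning statement itself, the argument as written is circular. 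This is exactly where the paper departs from a level-by-level scheme: for $1\le r\le k$ the authors note explicitly that peeling level by level does not give a tight bound, and instead induct on $n$, restricting $c$ to $\Delta_{k,n-1}$ via $c'(I,J)=c(I,Jn)$ and to $\Delta_{k-1,n-1}$ via $c''(I,J)=c(In,J)$ and subtracting suitable combinations of the $c_S$ at each stage. For the other two cases the paper avoids the issue by counting degrees of freedom from level $0$ upward, where \eqref{eq:Imax} (or the emptiness of $J$) supplies the free parameters and \eqref{eq:middle} determines each subsequent level up to one constant per $I$; your top-down peeling makes even those cases harder than necessary (and note that in case~3 the top level has $I=\varnothing$, so the single variable there is a $J$-subset, not an $I$-subset as you state). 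To repair your proof you would need either to carry out the finite-difference characterization of the row space of $W_{\ell,|J|}$ and show that the balancing conditions imply it --- a substantial argument, especially in case~1 where \eqref{eq:Imax} and \eqref{eq:Jmax} both intervene --- or to adopt the paper's bottom-up count for cases 2 and 3 and its induction on $n$ for case~1.
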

\noindent Note that the number of levels for the $(I,J)$'s that appear in these three cases are $r$, $k$, and $n-r$ respectively.

\begin{proof}

We first show that the $c^{r,k,n}_S$ are Minkowski weights, i.e.\ they satisfy \eqref{eq:middle}, \eqref{eq:Imax}, and \eqref{eq:Jmax} above. We often write $c_S$ instead of $c^{r,k,n}_S$ to simplify notation.

Condition~\eqref{eq:Imax} only applies when $|I| = k-1$ at level $0$.  Then $c_S(I,J) = 1$ iff $I \supseteq S$, so \eqref{eq:Imax} holds. Similarly, condition~\eqref{eq:Jmax} only applies when $|J| = n-k-1$ at highest level. If $|S|$ is less than the number of levels, then $c_S(I,J)=0$ for all $(I,J)$.  If $|S|$ is equal to the number of levels, then $c_S$ takes $0/1$ values with $c_S(I,J) = 1$ iff $J \supseteq S$.  In either case, \eqref{eq:Jmax} is satisfied.

Finally, we look at cases where condition~\eqref{eq:middle} applies. Let $A,B,x,y$ be disjoint with $|A| < k-1$, $|B| < n-k-1$, and $x,y$ are singletons. Then we have the following cases:
\begin{itemize}
\item ($c_S(Ax,B) = 1$ and $x \in S$) Then $c_S(A,Bx) = 1$; $c_S(Ay,B) = c_S(A,By) = 0$.
\item ($c_S(Ax,B) = 1$ and $x \notin S$) Then necessarily $y \notin S$, so $c_S(Ay,B) = 1$ and  $c_S(A,Bx) = c_S(A,By) = 0$.
\item ($c_S(Ax,B) = 0$ and $x \in S$) Then $c_S(A,Bx) = c_S(Ay,B) = c_S(A,By) = 0$.
\item ($c_S(Ax,B) = 0$ and $x \notin S$)  If $c_S(Ay,B) = 1$, then this is covered by the first two cases above by interchanging $x$ and $y$, so we may assume that $c_S(Ay,B) = 0$.\\
If $y \in S$, then $c_S(A,Bx) = 0$ and $c_S(A,By) = 0$.\\
If $y \notin S$, then $c_S(A,Bx) = c_S(A,By)$.
\end{itemize}
In each of the cases above, \eqref{eq:middle} is satisfied.  Thus, the functions $c_S$ are Minkowski weights.

\smallskip

We now show that the $c_S$ are linearly independent.  We will use the following result of de~Caen on {\em set-inclusion matrices}: Let $W_{i,j}(n)$ be the $0/1$ matrix whose rows and columns are indexed by subsets of $\{1,2,\dots,n\}$ of size $i$ and $j$ respectively where $W_{A,B}=1$ if $A\subseteq B$ and $0$ otherwise. Then the rank of $W_{i,j}(n)$ is $\binom{n}{i}$ when $i \leq j$ and $i+j\le n$~\cite{deCaen}.

Fix $r,k$, and $n$. Let $M$ be a matrix whose columns are indexed by pairs $(I,J)$ satisfying~\eqref{eq:sizes} and rows by subsets $S\subset \{1,2,\dots,n\}$ of size less than the number of levels, with entry $M_{S,(I,J)}=c_S(I,J)$.  We order the sets $S$ from smallest to largest and the pairs $(I,J)$ by the size of $J$.  Then the matrix $M$ is block lower triangular where the $i^{th}$ diagonal block ($i\ge 0$) has rows indexed by sets $S$ of size $i$ and the columns by pairs $(I,J)$ at level $i$.  In the $i^{th}$ block, $M_{S,(I,J)} = 1$ if and only if $S \subseteq J$, so the columns of the $i^{th}$ blocks are exactly the same (with repetition) as the columns of the set-inclusion matrix $W_{i,j}(n)$ where $j$ is the size of $J$ at level $i$.  To complete the proof that the $i^{th}$ block has rank ${n \choose i}$, which is the number of rows, we check that $i+j \leq n$ in each of the following cases.  Recall that we are assuming $k \leq n-k$.
\begin{itemize}
\item ($1 \leq r < n-k$) The maximum $|I|$ is $k-1$, so the minimum $|J|$ is $n-r-k$.  For $(I,J)$ at level $i$, we have $j = |J| = n-r-k+i$.
\begin{itemize}
\item ($1 \leq r \leq k$) The number of levels is $r$, so $i < r$, and $i+j = n - r - k + 2i < n-r-k+2r = n - (k-r) \leq n$.
\item ($k < r < n-k$) The number of levels is $k$, so $i < k$, and $i+j = n-r-k+2i < n-r-k+2k = n - (r-k) < n$.
\end{itemize}
\item ($n-k \leq r \leq n-1$)  For $(I,J)$ at level $i$, we have $j = |J|= i$.  The number of levels is $n-r$, so $i < n-r \leq n/2$.  Thus $i+j = 2i < n$.
\end{itemize}
This proves that the $c_S$ are linearly independent.

\smallskip

We will now show that the functions $c_S$ span Minkowski weight space. First consider when $n-k\le r\le n-1$.  Then only condition~\eqref{eq:middle} applies.  We will choose values for a Minkowski weight on the pairs $(I,J)$ one level at a time.  At level $0$, $|J| = 0$ and $|I| = n-r-1$, and we choose the values on each of these pairs $(I,\varnothing)$ independently, giving us ${n \choose n-r-1}$ degrees of freedom.  After we have chosen values at level $i$, the equations \eqref{eq:middle} with LHS at level $i$ (with $|A| = n-r-1-i$) leave one degree of freedom at level $i+1$ for each $A$.  This gives ${n \choose n-r-1-i}$ degrees of freedom.  Continuing in this fashion, we get to level $n-r-1$ where $I = \varnothing$, with $1$ degree of freedom. Hence the dimension of Minknowski weight space is at most $\sum_{i=0}^{n-r-1} {n \choose i}$, which coincides with the lower bound proven above.

\smallskip

Now consider the case when $k < r < n-k$.  In this case the conditions \eqref{eq:middle} and \eqref{eq:Imax} both apply, but \eqref{eq:Jmax} does not.  The level $0$ has $|I| = k-1$, so the condition \eqref{eq:Imax} leaves ${n \choose k-1}$ degrees of freedom at level $0$.  Using condition \eqref{eq:middle}, the next levels have degrees of freedom equal to ${n \choose k-2}$, ${n \choose k-3}, \dots,$ respectively up to ${n \choose 0}=1$ when $I = \varnothing$.  This gives the desired upper bound on dimension, which is $\sum_{i=0}^{k-1}\binom{n}{i}$.

\smallskip

Finally, let $1\le r\le k$. In this case all three types of conditions \eqref{eq:middle}, \eqref{eq:Imax}, and \eqref{eq:Jmax} apply. The same approach as above does not give a tight upper bound since it does not use condition~\eqref{eq:Jmax}.

 For $n \leq 3$ it is easy to check that the functions $c^{r,k,n}_S$ span Minkowski weight space for all possible values of $r$ and $k$.  Let $n \geq 4$ and fix $1 \leq r \leq k$.  The number of levels is $r$.
Let $c$ be a Minkowski weight on $\Delta_{k,n}$. 
Define $c'$ on pairs $(I,J)$ where $I,J\subset \{1,\dots, n-1\}$, $|I|+|J|=(n-1)-r-1$, $|I|<k$, and $|J|<n-k$ by $c'(I,J)=c(I,Jn)$. It is straightforward to check that $c'$ is a Minkowski weight on codimension $r$ cones of $\Delta_{k,n-1}$.  
For $S \subseteq [n-1]$ with $|S| \leq r-1$, the function $c^{r,k,n-1}_S$ is obtained from $c^{r,k,n}_S$ in this way. Note that the number of levels is still $r$ for $r,k,n-1$, so $c_S^{r,k,n-1}$ is defined.  
By induction on $n$ the functions $c^{r,k,n-1}_S$ span the Minkowski weight space on codimension $r$ cones for $\Delta_{k,n-1}$, and only the $c_S$ with $|S| = r-1$ have non-zero coordinates at the top level, so there are coefficients $a_S$ such that $c'(I,J)=\sum_{S\in {[n-1] \choose r-1}} a_S c^{r,k,n-1}_S(I,J)$ for all $(I,J)$ at top level for $r,k,n-1$.

Let $c_1 = c-\sum_{S \in {[n-1] \choose r-1}} a_S c^{r,k,n}_S$.  It is a Minkowski weight for $r,k,n$ with $c_1(I,J) = 0$ for all top level coordinates $(I,J)$ with $n \in J$ or $n\notin I \cup J$.
Define a Minkowski weight $c_1''$ on  codimension $r$ cones of $\Delta_{k-1,n-1}$ by $c_1''(I,J)=c_1(In, J)$. For $|S|=r-1$ and $n \in S$ we have $(c^{r,k,n}_S)'' = c^{r,k-1,n-1}_{S-\{n\}}$.  Note that the number of levels for $r, k-1, n-1$ is $r$ when $r<k$ and $k-1$ when $r=k$; in either case $|S-\{n\}|=r-2$ is less than the number of levels, so $c^{r,k-1,n-1}_{S-\{n\}}$ is defined.   We can write $c_1''=\sum_{S \in {[n-1] \choose r-2}} a_S c^{r,k-1,n-1}_{S}$. Let $c_2=c_1-\sum_{S \in {[n-1] \choose r-2}} a_S c^{r,k,n}_{Sn}$. By construction, $c_2(I,J) = 0$ on all coordinates $(I,J)$ at top level with $n \in I$. Since all these $c_S$ are also $0$ on coordinates with $n \notin I$, then $c_2$ is a Minkowski weight that is $0$ on all $(I,J)$ at the top level. 

The condition~\eqref{eq:Jmax} on $c_2$ becomes, when $|I|=k-2$, $0=c_2(I,Jx)-c_2(I,Jy)$. This allows us to apply the same argument as above, using the weights $c_S$ where $|S|=r-2$, to get a Minkowski weight which is $0$ on all $(I,J)$ with $I\ge k-2$. We can repeat this process until we have $|S|=r-1-(r-1)=0$. When we reach this case, we will have a Minkowski weight which is $0$ on all $(I,J)$ with $I\ge k-(r-1)$. By condition \eqref{eq:Jmax}, we get a multiple of the Minkowski weight $c_{\varnothing}$. Thus, we have written $c=\sum_{|S|<r} a_S c_S$ as a linear combination of the functions $c_S$, so the $c_S$ span the space of Minkowski weights. 
\end{proof}

\section{Coordinator numbers of $A_{n-1}^*$ lattices}

Let $n$ be an integer $\geq 3$. The type $A_{n-1}$ lattice is the sublattice of $\ZZ^n$ spanned by the type $A_{n-1}$ roots $\{e_i - e_j : i \neq j\}$.  The type $A_{n-1}^*$ lattice is the dual lattice to $A_{n-1}$, which we will identify with the image of the orthogonal projection of $\ZZ^n$ onto the hyperplane $\{x \in \RR^n : x_1 + \cdots + x_n = 0\}$.  

The shortest vectors in the $A_{n-1}^*$ lattice are the projection of $\{e_1, -e_1, \dots, e_n, -e_n\} \subset \ZZ^{n}$ along the direction $(1,1,\dots,1)$ onto the hyperplane above.  Any collection of $n-1$ linearly independent vectors among them form a $\ZZ$-basis of the lattice, that is, the shortest vectors form a unimodular system. Their convex hull is the $n-1$-dimensional {\em diplo-simplex} which we denote by $\Delta(\frac{n}{2},n)^*$. It is polar to the following slice of the $n$-cube $[-1,1]^n$
\[
\left\{x \in \RR^n : x_1 + \cdots + x_n = 0 \text{ and } -1 \leq x_i \leq 1 \text{ for all } i \right\}
\]
which we will denote by $\Delta(\frac{n}{2},n)$ by abuse of notation. When $n$ is even, this is a translate of the ``usual'' hypersimplex dilated by $2$. 

The {\em coordination sequence} of the type $A_{n-1}^*$ lattice with respect to the shortest vectors is the sequence $S(0), S(1), S(2), \dots$ where $S(k)$ is the number of lattice points on the boundary of $k \cdot \Delta(\frac{n}{2},n)^*$.  Its generating function $G(x) := \sum_{k\geq 0} S(k) x^k$ is called the {\em growth series}.  By Ehrhart theory $G(x) = \frac{h(x)}{(1-x)^n}$ where $h(x)$ is a polynomial of degree $< n$, called the {\em coordinator polynomial}.   The coefficients of $h(x) = h_0 + h_1 x + \cdots + h_{n-1} x^{n-1}$ are called the {\em coordinator numbers} of the $A_{n-1}^*$ lattice.  
 
Using results of O'Keeffe~\cite{OKeeffe},  Conway and Sloane computed the coordinator numbers of the lattice $A_{n-1}^*$  for small values of $n$~\cite{ConwaySloane}\footnote{See also the OEIS sequence A204621 at \url{http://oeis.org/A204621}.}.  Here we prove the general formula.
 
\begin{theorem}[Coordinator numbers]
For any positive integer $n \geq 2$, the $r^{th}$ coordinator number of $A_{n-1}^*$ is equal to
$\sum_{i=0}^r {n \choose i}$ if $0\leq r \leq \frac{n-1}{2}$  and 
$\sum_{i=0}^{n-1-r} {n \choose i}$ if $ \frac{n-1}{2} \leq r \leq n-1$.
\end{theorem}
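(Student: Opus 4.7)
The plan is to identify the coordinator polynomial with the Ehrhart $h^*$-polynomial of the diplo-simplex $P := \Delta(\tfrac{n}{2}, n)^*$, and then compute it by reduction to a unimodular boundary triangulation. Since the shortest vectors of $A_{n-1}^*$ form a unimodular system and the origin is an interior lattice point of $P$, the star triangulation of $P$ from the origin---over any triangulation of $\partial P$ using only shortest vectors as vertices---is a unimodular lattice triangulation $T$ of $P$. The integer Carath\'eodory property available for such $P$ implies that the word length of any lattice point equals the smallest integer dilate of $P$ containing it, so $S(k) = |kP \cap A_{n-1}^*| - |(k-1)P \cap A_{n-1}^*|$ and $G(x) = (1-x)\,\mathrm{Ehr}_P(x)$. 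After the convention factor, the coordinator polynomial thus coincides with $h^*_P(x)$.

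By Stanley's theorem for unimodular triangulations, $h^*_P(x) = h_T(x)$, and since coning from the origin does not change the $h$-polynomial, this equals the $h$-polynomial of $\partial T$ viewed as a simplicial complex. Moreover $P$ is reflexive---its polar $\Delta(\tfrac{n}{2}, n)$ is a lattice polytope---so by Hibi's theorem $h^*_P$ is palindromic, $h^*_r = h^*_{n-1-r}$. This matches the symmetry of the claimed formula and reduces the task to computing $h^*_r$ for $0 \le r \le \lfloor\tfrac{n-1}{2}\rfloor$.

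Finally, $P$ is quasi-simplicial by a direct analogue of Lemma~\ref{quasi-simp}: for odd $n$ its facets are simplices, while for even $n$ the facets are the simplicial polytopes $\Delta_{n/2-1}\oplus\Delta_{n/2-1}$. Consequently, every face of $\partial P$ of codimension at least $2$ is already a simplex, so the face numbers $f_r(\partial T)$ in this range coincide with $f_r(\partial P)$ regardless of the triangulation chosen. Counting ordered pairs of disjoint subsets $(I,J)\subseteq[n]$ with $|I|+|J|=r+1$ gives $f_r(\partial P) = 2^{r+1}\binom{n}{r+1}$ for $r \le \lfloor\tfrac{n-1}{2}\rfloor - 1$, because the constraint $||I|-|J|| \le n-1-r$ implicit in the face description of the diplo-simplex is automatic in this range. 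Running the same generating-function manipulation as in the first part of the proof of Theorem~\ref{thm:toricH}---using that the top $\lceil n/2\rceil$ coefficients of $(x+1)^n-2^n$ agree with those of $(x-1)\sum_{i=0}^n(\sum_{j=0}^i\binom{n}{j})x^{n-1-i}$---yields $h^*_r = \sum_{i=0}^r\binom{n}{i}$ throughout the first half. The hardest step will be the boundary index $r = \lfloor\tfrac{n-1}{2}\rfloor$, where the constraint $||I|-|J|| \le n-1-r$ becomes binding and $f_r$ no longer equals $2^{r+1}\binom{n}{r+1}$; this can be resolved either by refining the face count at that level or by invoking reflexivity together with a direct computation of $h^*(1) = (n-1)!\,\mathrm{vol}(P)$ to pin down the last coefficient and conclude by induction.
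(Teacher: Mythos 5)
Your approach is essentially the paper's: identify the coordinator polynomial with the $h^*$-polynomial of a unimodular triangulation of the boundary of the diplo-simplex, compute the first half of its $h$-vector from the low-dimensional face numbers (which are unaffected by the choice of triangulation), and finish by palindromicity. The paper works directly with the boundary complex --- since $S(k)$ is by definition the number of lattice points on the boundary of $kP$, one has $G(x)=\mathrm{Ehr}_{\partial P}(x)$ with no reflexivity or Carath\'eodory input needed --- and it gets palindromicity from the Dehn--Sommerville relations and the identification with the toric $h$-vector rather than from Hibi's theorem; these are cosmetic differences.

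Two points in your write-up need repair. First, the claim that $f_r(\partial T)=f_r(\partial P)$ for all faces of codimension at least $2$ is false: for $n=4$ the diplo-simplex is the $3$-cube, and any boundary triangulation adds a diagonal to each square facet, which is a new face of codimension $2$. Quasi-simpliciality guarantees that the existing low-dimensional faces are simplices, but it does not by itself prevent the triangulation of the non-simplex facets from creating new low-dimensional faces. What saves the argument is that the minimal non-faces of the facet $\Delta_{n/2-1}\oplus\Delta_{n/2-1}$ are the vertex sets of the two summands, each of size $n/2$, so every new face of the triangulation has dimension at least $n/2-1$, which lies strictly above the range $r\le\lfloor\tfrac{n-1}{2}\rfloor-1$ you actually use; you should say this explicitly. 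Second, the ``hardest step'' you defer does not exist: since $h_r$ is determined by $f_{-1},f_0,\dots,f_{r-1}$, the top needed coefficient $h_{\lfloor(n-1)/2\rfloor}$ only requires $f_j$ for $j\le\lfloor\tfrac{n-1}{2}\rfloor-1$, exactly the range where your count $f_j=2^{j+1}\binom{n}{j+1}$ is valid. No refined face count at the boundary index and no volume computation are necessary.
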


 For a $d$-dimensional simplical complex $\Delta$ in $\RR^n$, the $f$-polynomial is $f(x) = \sum_{\sigma \in \Delta} x^{d - \dim(\sigma)}$  and the $h$-polynomial is  $h(x) = f(x-1)$.   
The Ehrhart $h^*$-polynomial of $\Delta$ is  \[h^*(x) = (1-x)^{d+1} ( 1 + \sum_{k > 1} |k \Delta \cap \ZZ^m| x^k).\]  A $j$-dimensional unimodular simplex in $\Delta$ contributes $x^{j+1}(x-1)^{d-j}$ to the $h$-polynomial and $(1-x)^{d-j}$ to the $h^*$-polynomial. If $\Delta$ consists only of unimodular simplices, then  by inclusion-exlusion, we have 
\[
h(x) = x^{d+1} \cdot h^*(\frac{1}{x}).
\]
Also see~\cite[Theorem~3]{ABHPS} and references therein.

The numerator of the growth series is the Ehrhart $h^*$-polynomial of the boundary of the diplo-simplex.  By the discussion above it suffices to compute the $h$-polynomial of a triangulation of the boundary of the diplo-simplex, which is palindromic.

\begin{proof}
First consider the case when $n$ is even.  The diplo-simplex $\Delta\left(\frac{n}{2},n\right)^*$ is quasi-simplicial by  Lemma~\ref{quasi-simp}.  
Let $\Delta$ be the triangulation of the boundary of $\Delta(\frac{n}{2},n)^*$.  
The vertices form a unimodular system, so $\Delta$ is unimodular.  Moreover the triangulation does not introduce new faces of dimension $\leq \frac{n}{2}$ since all faces of codimension $\geq 2$ are already simplicial.

The $h$-vector of $\Delta$ is palindromic and determined by $f_0,f_1,\dots,f_{\lfloor \frac{n}{2} \rfloor}$.  Since the toric $h$-vector of $\Delta(\frac{n}{2},n)^*$ is also determined by the first half of the $f$-vector in the same way, the $h$-vector of $\Delta$ coincides with the toric $h$-vector of the polytope $\Delta(\frac{n}{2},n)^*$, hence we have the same formula proven earlier in Theorem~\ref{thm:toricH}.  On the other hand the coordinator numbers coincide with the $h$-numbers as explained above.

Now suppose $n$ is odd. Then no vertex of the $n$-cube $[-1,1]^n$  lies on the hyperplane $x_1 + \cdots + x_n = 0$.  Each face of the $n$-cube either intersects the hyperplane at a relative interior point or does not intersect at all.  The vertices of $\Delta\left(\frac{n}{2},n\right)$ are intersections of the interiors of some edges of the $n$-cube with a hyperplane.  Since the $n$-cube is simple, each edge is contained in $n-1$ facets.  Then each vertex of $\Delta\left(\frac{n}{2},n\right)$ is contained in $n-1$ facets as well.  Thus $\Delta\left(\frac{n}{2},n\right)$ is simple, and its dual $\Delta\left(\frac{n}{2},n\right)^*$ is simplicial.

The hyperplane  $x_1+\cdots+x_n = 0$ meets the relative interiors of all faces of dimension $\leq \lceil \frac{n}{2} \rceil$  of the $n$-cube.  Then the diplo-simplex $\Delta(\frac{n}{2},n)^*$ has $2^{i+1} {n \choose i+1}$ faces of dimension $i$ for each $0 \leq i \leq \lfloor \frac{n}{2} \rfloor$. By the same computation as in Theorem~\ref{thm:toricH} (for the case when $0\le r\le k-1$), the $r^{th}$ $h$-number of $\Delta(\frac{n}{2},n)$ is $\sum_{i=0}^r \binom{n}{i}$ for $0\le r\le \lfloor \frac{n}{2}\rfloor$. 
\end{proof}

\section{Open problems} 

For $\Delta(k,n)^*$ the toric $h$ numbers and Chow-Betti numbers of the normal fan have very similar forms. See Table~\ref{table:compare}. However our proofs are entirely independent from each other, and we do not know of a direct relationship between them.  

\begin{table}[!htbp]
{\footnotesize
\begin{tabular}{l|l|l}
$k,n$ & toric-$h$ vector & Chow--Betti numbers\\
\hline
2, 4 & 
1 5 5 1 &
1 1 5 1
\\
\hline
2, 5 & 
1 6 6 6 1 &
1 1 6 6 1
\\
\hline
2, 6 &
1 7 7 7 7 1 &
1 1 7 7 7 1
\\
3, 6 &
1 7 22 22 7 1 &
1 1 7 22 7 1
\\
\hline
2, 7 &
1 8 8 8 8 8 1 &
1 1 8 8 8 8 1
\\
3, 7 &
1 8 29 29 29 8 1 &
1 1 8 29 29 8 1
\\
\hline
2, 8 &
1 9 9 9 9 9 9 1 &
1 1 9 9 9 9 9 1
\\
3, 8 &
1 9 37 37 37 37 9 1&
1 1 9 37 37 37 9 1
\\
4, 8 &
1 9 37 93 93 37 9 1 &
1 1 9 37 93 37 9 1
\\
\hline
2, 9 &
1 10 10 10 10 10 10 10 1&
1 1 10 10 10 10 10 10 1
\\
3, 9 &
1 10 46 46 46 46 46 10 1&
1 1 10 46 46 46 46 10 1
\\
4, 9 &
1 10 46 130 130 130 46 10 1&
1 1 10 46 130 130 46 10 1
\\
\hline
2, 10 &
1 11 11 11 11 11 11 11 11 1&
1 1 11 11 11 11 11 11 11 1
\\
3, 10 &
1 11 56 56 56 56 56 56 11 1&
1 1 11 56 56 56 56 56 11 1
\\
4, 10 &
1 11 56 176 176 176 176 56 11 1&
1 1 11 56 176 176 176 56 11 1
\\
5, 10 &
1 11 56 176 386 386 176 56 11 1&
1 1 11 56 176 386 176 56 11 1
\\
\hline
\end{tabular}
}
\caption{Toric $h$-vector of $\Delta(k,n)^*$ and Chow-Betti numbers of the normal fan of $\Delta(k,n)$ graded by codimension of cones.}
\label{table:compare}
\end{table}

A general questions is: how are toric-$h$ and Chow-Betti numbers related?  They may have little resemblance in general.  For instance the Chow-Betti numbers of the normal fan of a simplicial polytope depend only on the number of facets of the polytope --- they have the form $(1,n-d,1,\cdots,1)$ where $n$ is the number of facets and $d$ is the dimension of the polytope.  

McConnell showed that the rational homology of toric varieties is not a combinatorial invariant~\cite{McConnell89}. In fact, the same example shows that the Chow-Betti numbers are also not a combinatorial invariant. 
Using gfan, we found that the Chow-Betti numbers of the normal fan of $P_1$ and $P_2$ in~\cite[Example~1.4]{McConnell89} are $(1,4,9)$ and $(1,3,9)$ respectively.  The Chow-Betti numbers of their face fans also differ, being $(1,2,11)$ and $(1,1,11)$ respectively. The toric h-vector of both these polytopes are $(1,11,11,1)$.  

We do not know whether different geometric realizations of (dual) hypersimplices can give different Chow-Betti numbers. While we expect that such examples do exist, we have not been able to find any computationally. In general, for which combinatorial types of polytopes (or fans) do all geometric realizaions have the same Chow-Betti numbers? 

\section*{Acknowledgments}
We used Gfan~\cite{gfan}, Polymake~\cite{polymake}, and Macaulay2~\cite{GS} extensively for experiments.
CW was supported by a President's Undergraduate Research Award at Georgia Tech.  Both authors were partially supported by NSF-DMS grant \#1600569.

\bibliographystyle{amsalpha}
\bibliography{mybib}
 
 \end{document}